\newcommand{\bbc}{\mathbb{C}}
\newcommand{\bbq}{\mathbb{Q}}
\newcommand{\bbz}{\mathbb{Z}}
\newcommand{\Res}{\mathrm{Res}}
\newcommand{\ch}{\mathrm{char}\,}
\newcommand{\ab}{\mathrm{ab}}
\newtheorem{thm}{Theorem}
\newtheorem{lem}[thm]{Lemma}
\newtheorem{question}[thm]{Question}
\newtheorem{remarkk}[thm]{Remark}
\newtheorem{examplee}[thm]{Example}
\author{Bo-Hae Im and Michael Larsen}
\date{\today}
\address{Department of Mathematics, Chung-Ang University, 221, Heukseok-dong, Dongjak-gu, Seoul, 156-756, South Korea}\email{imbh@cau.ac.kr}
\address{Department of Mathematics, Indiana University, Bloomington,
Indiana 47405, USA} \email{mjlarsen@ndiana.edu}
\subjclass[2000]{11G05}
\thanks{Bo-Hae Im was supported by the National Research Foundation of Korea Grant funded by the Korean Government(MEST) (NRF-2011-0015557).
Michael Larsen was partially supported by NSF grants DMS-0800705 and DMS-1101424.}
\begin{document}

\title[Infinite rank of elliptic curves over $\mathbf{Q}^{\ab}$]{Infinite rank of elliptic curves over $\mathbf{Q}^{\ab}$}

\begin{abstract}
If $E$ is an elliptic curve defined over a quadratic field $K$, and the $j$-invariant of $E$ is not $0$ or $1728$, then $E(\mathbf{Q}^{\ab})$ has infinite rank.  If $E$ is an elliptic curve in Legendre form, $y^2 = x(x-1)(x-\lambda)$, where $\mathbf{Q}(\lambda)$ is a cubic field, then $E(K \mathbf{Q}^{\ab})$ has infinite rank.  If $\lambda\in K$ has a minimal polynomial $P(x)$ of degree $4$ and
$v^2 = P(u)$ is an elliptic curve of positive rank over $\bbq$, we prove that $y^2 = x(x-1)(x-\lambda)$ has infinite rank over $K\bbq^{\ab}$.

\end{abstract}

\maketitle

\section{Introduction}

In \cite{FJ}, G.\ Frey and M.\ Jarden proved that every elliptic curve $E/\bbq$
has infinite rank over $\mathbf{Q}^{\ab}$ and asked whether the same is true for all abelian varieties.  For a general number field $K$ (not necessarily contained in $\bbq^{\ab}$), the question would be whether every abelian variety $A$ over $K$ is of infinite rank over $K \bbq^{\ab}$.  An affirmative answer to this question would follow from an affirmative answer to the original question, since every $\bbq^{\ab}$-point of the Weil restriction of scalars $\Res_{K/\bbq} A$ gives a $K \bbq^{\ab}$-point of $A$.
We specialize the question to dimension $1$.

\begin{question}
\label{easier}
If $E$ is an elliptic curve over a number field $K$, must $E$ have infinite rank over
$K \bbq^{\ab}$?
\end{question}

Specializing further to the case that $K$ is abelian over $\bbq$, the question can be reformulated as:

\begin{question}
\label{abelian}
Does every elliptic curve over $\bbq^{\ab}$ have infinite rank over $\bbq^{\ab}$?
\end{question}

In a recent paper \cite{K}, E.\ Kobayashi considered Question~\ref{abelian}
when $[K:\bbq]$ is odd.  In this setting, she gave an affirmative answer,
conditional on the Birch-Swinnerton-Dyer conjecture.

We give an affirmative answer to Question~\ref{easier} when
$E$ is defined over a field $K$ of degree $\le 4$ over $\bbq$
and satisfies some auxiliary condition.
In all of our results, we can replace $\bbq^{\ab}$ by $\bbq(2)$,
the compositum of all quadratic extensions of $\bbq$.  Our strategy for
finding points over $\bbq(2)$
entails looking for $\bbq$-points on the Kummer variety $\Res_{K/\bbq} E / (\pm 1)$ by
looking for curves of genus $\le 1$ on that variety.
When $K$ is a quadratic field, $\Res_{K/\bbq} E$ is an abelian surface isomorphic, over $\bbc$, to a product of two elliptic curves.
Our construction of a curve on the Kummer surface $\Res_{K/\bbq} E / (\pm 1)$
is modelled on
the construction of a rational curve on $(E_1\times E_2)/(\pm 1)$
due to J-F. Mestre \cite{M} and to M. Kuwata and L. Wang \cite{KW}.
For $[K:\bbq] = 3$, our proof depends on an analogous construction of a rational curve on  $(E_1\times E_2\times E_3)/(\pm 1)$ which is presented in \cite{Im}.
We do not know of any rational curve on $(E_1\times E_2\times E_3\times E_4)/(\pm 1)$
for generic choices of the $E_i$, but  \cite{Im} constructs a curve of genus $1$ in this variety.

\section{A Geometric Construction}

We now recall a geometric construction of a curve in
\begin{equation}
\label{kummer}
(E_1\times \cdots \times E_n)/( \pm 1 ),
\end{equation}
where $(\pm 1)$ acts diagonally on the product.

\begin{lem}\label{curve}
Let $\bar K$ be a separably closed field with $\ch(\bar K)\neq 2$ and  for an integer $n\geq 2$, let $E_1, \ldots E_n$ be pairwise non-isomorphic elliptic curves  over $\bar K$.
Then $E_1\times \cdots \times E_n)/( \pm 1 )$ contains a curve $X$ whose normalizer has genus
$$g_n:=2^{n-3}(n-4)+1.$$

In particular, $g_2=g_3=0$ and $g_4=1$.
\end{lem}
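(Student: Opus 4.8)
The plan is to construct the curve $X$ explicitly by intersecting the Kummer variety with a suitable linear system, exploiting the classical geometry of each $E_i$ as a double cover of $\mathbb{P}^1$. Each elliptic curve $E_i$ carries the hyperelliptic quotient map $E_i \to E_i/(\pm 1) \cong \mathbb{P}^1$, ramified at the four $2$-torsion points, so the product quotient $(E_1\times\cdots\times E_n)/(\pm 1)$ admits a degree-$2^{n-1}$ map onto $(\mathbb{P}^1)^n/(\text{nothing})$ once we account for the diagonal sign. The idea, following Mestre and Kuwata--Wang, is to cut out a curve by imposing $n-1$ independent relations among the $x$-coordinate functions $x_1,\ldots,x_n$ that are compatible with the diagonal $(\pm 1)$-action, and then to compute the genus of its normalization via Riemann--Hurwitz.

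First I would set up coordinates: write each $E_i$ in the form $y_i^2 = f_i(x_i)$ with $f_i$ a separable cubic, and consider the function field generated by $x_1,\ldots,x_n$ together with the single invariant combination $y_1 y_2 \cdots y_n$ (or products of even numbers of the $y_i$), which are precisely the functions invariant under the diagonal $(\pm 1)$. I would then impose a chain of fractional-linear or bilinear relations tying consecutive $x_i$ together — concretely, relations of the form $x_{i+1} = \phi_i(x_i)$ or a symmetric bilinear condition — chosen so that the resulting curve $X$ dominates $\mathbb{P}^1$ via $x_1$ and so that a single square-root function $\sqrt{f_1(x_1)f_2(x_2)\cdots f_n(x_n)}$ generates the remaining part of the function field. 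This realizes $X$ as a double cover of a rational base curve, branched over the locus where the product $\prod_i f_i(x_i)$ vanishes or has odd order.

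The genus computation is then a Riemann--Hurwitz count for this double cover. The branch points come from the $2$-torsion of the individual $E_i$ pulled back under the chosen relations: each $f_i$ contributes its three roots (plus the point at infinity), and along the curve $X$ these assemble into a branch divisor whose degree grows like $n\cdot 2^{n-2}$, the binomial weighting reflecting how many sign-choices survive the diagonal identification. Matching Riemann--Hurwitz, $2g(X) - 2 = 2(2\cdot 0 - 2) + (\text{number of branch points})$, should yield $g_n = 2^{n-3}(n-4)+1$ after one carefully tabulates which ramification points genuinely occur on the normalization; the coefficient $2^{n-3}(n-4)$ is exactly what one expects from $\tfrac{1}{2}\bigl(n\cdot 2^{n-2} - 2\cdot 2^{n-2}\bigr)$, i.e. a branch divisor of degree $n\,2^{n-2}$ together with the base genus contribution. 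The sanity checks $g_2=g_3=0$ and $g_4=1$ follow immediately from the formula and confirm the branch count. To complete the base cases geometrically, I would exhibit the rational parametrizations for $n=2,3$ directly from \cite{M}, \cite{KW}, \cite{Im}, matching them to the construction above.

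The main obstacle I anticipate is verifying irreducibility of $X$ and controlling its singularities so that the Riemann--Hurwitz computation applies to the normalization rather than to some reducible or multiply-traced image. Because the $E_i$ are assumed pairwise non-isomorphic, the various $f_i$ have no shared symmetries, which should prevent the double cover from splitting; but making this rigorous — ruling out that the chosen relations force $X$ to decompose or to acquire extra components from coincidences among the $2$-torsion — is the delicate point. Likewise, ensuring that the branch divisor is \emph{reduced} (no branch point occurs with even multiplicity along $X$, which would remove it from the ramification and alter the genus) requires the genericity encoded in the pairwise non-isomorphism hypothesis, and this is where the argument must be most careful.
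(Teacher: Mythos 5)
There is a genuine gap, and it lies in the central point of the computation: the structure of the cover. The paper's construction (which your first paragraph correctly anticipates) is to pull back the diagonal $\mathbf{P}^1\subset(\mathbf{P}^1)^n$, i.e.\ to set all the $x_i$ equal to a single coordinate $x$. But the resulting curve $X$ inside $(E_1\times\cdots\times E_n)/(\pm1)$ is \emph{not} a double cover of a rational curve generated by the single function $\sqrt{f_1(x)\cdots f_n(x)}$, as you propose. The $(\pm1)$-invariant function field requires \emph{all} the pairwise products $y_1y_2,\dots,y_1y_n$ (these generate the others, since $y_iy_j=(y_1y_i)(y_1y_j)/f_1(x)$), so $X$ is a $(\bbz/2\bbz)^{n-1}$-cover of $\mathbf{P}^1$ of degree $2^{n-1}$, given in the paper by the simultaneous equations $u_{j-1}^2=(v-\lambda_1t)(v-\lambda_jt)$ for $j=2,\dots,n$. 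Your proposed field $\bar K(x)\bigl(\sqrt{\prod_if_i(x)}\bigr)$ is only a degree-$2$ subextension, hence a quotient curve of $X$, and its genus does not follow the stated formula (for $n=3$ it already gives a hyperelliptic curve of genus $2$ rather than $0$).

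This mismatch shows up in your own arithmetic. The correct Riemann--Hurwitz count is $2g_n-2=2^{n-1}(-2)+n\,2^{n-2}$: the first term carries the degree $2^{n-1}$ of the cover, and the branch divisor has degree $n\,2^{n-2}$ because each of the $n$ points $\lambda_1,\dots,\lambda_n$ has exactly $2^{n-2}$ ramified preimages of index $2$ (over $\lambda_1$ the inertia generator is the diagonal element $(-1,\dots,-1)$ of $(\bbz/2\bbz)^{n-1}$, again a single involution). Your formula $2g(X)-2=2(2\cdot0-2)+(\text{branch points})$ uses degree $2$ in the first term, and the expression $\tfrac12\bigl(n\,2^{n-2}-2\cdot2^{n-2}\bigr)$ you offer equals $2^{n-3}(n-2)$, not $2^{n-3}(n-4)$; for $n=4$ your two versions give $7$ and $4$ respectively instead of $1$. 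Once the cover is set up as a fibered product of $n-1$ conics over $\mathbf{P}^1$, the irreducibility and reducedness issues you flag become transparent: the $n-1$ quadratic extensions are linearly independent precisely because the $\lambda_i$ are distinct (which is where pairwise non-isomorphism enters), and every branch point is simple. Your chain of relations $x_{i+1}=\phi_i(x_i)$ is an unnecessary complication --- the identity maps suffice.
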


\begin{proof}
Let $E_i$ be written in Legendre form : for $i=1,2,\ldots n$,
$$E_i : y_i^2=x_i(x_i-1)(x_i-\lambda_i), ~~\lambda_i\in\bar{K}.$$
Since the $E_i$ are non-isomorphic over $\bar{K}$, the $\lambda_i$ are distinct.

Considering $E_1\times \cdots\times E_n$ as a $(\bbz/2\bbz)^n$-cover of
$$E_1/(\pm 1)\times \cdots\times E_n/(\pm 1)\cong (\mathbf{P}^1)^n,$$
we examine the inverse image in (\ref{kummer}) of $\mathbf{P}^1$ embedded diagonally in $(\mathbf{P}^1)^n$.

An affine open set of the resulting curve has coordinate ring
$$\begin{cases}
z_{12}^2&=x^2(x-1)^2(x-\lambda_1)(x-\lambda_2) \\
 &\vdots \\
z_{1n}^2&=x^2(x-1)^2(x-\lambda_1)(x-\lambda_n),
\end{cases}$$ with $z_{12}=y_1y_2$, $\ldots,$ $z_{1n}=y_1y_n$  fixed under the action of $(\pm 1)$.  A projective non-singular model is given in homogeneous coordinates by
$$C_n : \begin{cases}
u_1^2&=(v-\lambda_1t)(v-\lambda_2t),\\
&\vdots \\
u_{n-1}^2&=(v-\lambda_1t)(v-\lambda_nt).
\end{cases}$$
Then by the Riemann-Hurwitz formula, the genus $g_n$ of $C_n$ is given by
$$2g_n-2=2^{n-1}(-2)+ n 2^{n-2}.$$
If $n=2$ or $n=3$,  then $g_n=0$ and if $n=4$, then $g_n=1$. This completes the proof.
\end{proof}

It is difficult to tell when this construction produces a curve with infinitely many rational points over $\bbq$.  We do not
use Lemma~\ref{curve} directly in what follows, but it motivates the apparently \textit{ad hoc}, explicit constructions of the remainder of the paper. Each of the following sections deals with them and the quadratic case in Section~3 shows a concrete construction which motivates other cases.

\section{The Quadratic Case}

We begin with a lemma.

\begin{lem}\label{zero} Let $k$ be a non-negative integer and $Q(u,v)\in \bbq[u,v]$ a homogeneous polynomial of degree $2(2k+1)$ satisfying the functional equation $$Q(mu,v)=m^{2k+1}Q(v,u)$$ for a fixed squarefree integer $m\neq 1$.
Then $Q(u,v)$ cannot be a perfect square in $\bbc[u,v]$.
\end{lem}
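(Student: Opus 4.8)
The plan is to argue by contradiction, and the decisive step will be to upgrade an a priori complex square root of $Q$ to a rational one. Suppose then that $Q = R^2$ with $R \in \bbc[u,v]$. Since $Q$ is a nonzero homogeneous polynomial over $\bbq$, I would first show that the square root can be taken rational up to a rational scalar. Factor $Q = c\prod_i P_i^{e_i}$ into pairwise non-associate irreducible homogeneous polynomials over $\bbq$, with $c \in \bbq^\times$. Each $P_i$ is separable (we are in characteristic $0$), hence squarefree over $\bbc$, and distinct $P_i$ share no linear factor over $\bbc$ (a common factor would force a nontrivial common divisor already over $\bbq$). Thus the multiplicity over $\bbc$ of each linear form dividing $Q$ equals the corresponding $e_i$, so $Q$ is a square in $\bbc[u,v]$ precisely when every $e_i$ is even. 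In that case
$$Q = c\,G^2, \qquad G := \prod_i P_i^{e_i/2} \in \bbq[u,v],$$
with $G$ homogeneous of degree $2k+1$ and $G \neq 0$.

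Next I would substitute $Q = c\,G^2$ into the functional equation. Cancelling $c$ turns $Q(mu,v) = m^{2k+1}Q(v,u)$ into
$$G(mu,v)^2 = m^{2k+1}\,G(v,u)^2$$
in the integral domain $\bbc[u,v]$. Fixing a complex square root $\eta$ of $m^{2k+1}$ and factoring the difference of squares, one of the two factors must vanish, giving
$$G(mu,v) = \pm\,\eta\,G(v,u).$$

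Finally I would play rationality against the irrationality of $\eta$. Since $2k+1$ is odd and $m$ is a squarefree integer other than $1$, the number $m^{2k+1}$ is not the square of a rational number (a squarefree integer is a rational square only when it equals $1$), so $\eta \notin \bbq$. But $G \in \bbq[u,v]$ and $m \in \bbz$, so both $G(mu,v)$ and $G(v,u)$ have rational coefficients, and $G(v,u) \neq 0$. Choosing a monomial whose coefficient $r$ in $G(v,u)$ is nonzero (hence a nonzero rational), the displayed identity makes $\pm\eta\,r$ the corresponding coefficient of $G(mu,v)$, which is rational; thus $\eta \in \bbq$, a contradiction. Hence $Q$ is not a perfect square in $\bbc[u,v]$.

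I expect the descent in the first step to be the only real subtlety. Over $\bbc$ the relation $G(mu,v) = \eta\,G(v,u)$ is perfectly consistent, so the functional equation obstructs squareness only once one knows the square root may be taken rational up to a constant; the remaining coefficient comparison is routine. (Implicit throughout is that $Q \neq 0$, as befits a polynomial ``of degree $2(2k+1)$'', which is exactly what guarantees $G \neq 0$.)
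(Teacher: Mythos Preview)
Your proof is correct and follows essentially the same strategy as the paper: descend the complex square root to a rational polynomial (up to a rational scalar), then apply the functional equation to force $\sqrt{m}\in\bbq$. The only difference is in the descent step---you use unique factorization over $\bbq$ together with separability, whereas the paper normalizes a particular coefficient of $Q$ to $1$ and argues via the Galois action on the coefficients of the square root---but the remainder of the argument is identical.
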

\begin{proof}
Let $i$ be the largest integer such that $v^i$ divides $Q(u,v)$.  If $i$ is odd, $Q(u,v)$ cannot be a perfect square in $\bbc[u,v]$.  We therefore assume that $i=2j$.  Without loss of generality, we may assume that the $u^{4k+2-2j}v^{2j}$ coefficient is $1$.
If $q(u,v)$ is a square root of $Q(u,v)$ over $\bbc$, then the $u^{2k+1-j}v^j$-coefficient of $q(u,v)$ is $\pm 1$.
Every automorphism $\sigma$ of the complex numbers sends $q(u,v)$ to $\pm q(u,v)$.  However, $\sigma$ fixes the $u^{2k+1-j}v^j$ coefficient of $q(u,v)$, so $\sigma$ fixes $q(u,v)$, which means $q(u,v)\in \bbq[u,v]$.
From the given functional relation, $q(u,v)$ satisfies
$$q(mu,v)=\pm \sqrt{m} (m^{k}q(v,u)),$$
which gives a contradiction since $\sqrt m\not\in\bbq$.
\end{proof}

\begin{thm}\label{main}   Let $E\colon y^2=P(x):=x^3+\alpha x+\beta$ be an elliptic curve defined over a quadratic extension $K$ of $\bbq$. If  the $j$-invariant of $E$ is not $0$ or $1728$, then $E(\bbq^{ab})$ has infinite rank.
\end{thm}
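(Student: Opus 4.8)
The plan is to produce infinitely many independent points of $E$ over $\bbq(2)$, the compositum of all quadratic fields, which lies inside $\bbq^{\ab}$. Write $K=\bbq(\sqrt m)$ with $m$ squarefree and denote by $z\mapsto\bar z$ the nontrivial automorphism of $K/\bbq$. The first step is a reduction: for $x\in K$ the point $(x,\sqrt{P(x)})$ lies in $E(\bbq(2))$ exactly when $\sqrt{P(x)}\in\bbq(2)$, and since $K\subseteq\bbq(2)$ this holds iff $P(x)\in\bbq^\times (K^\times)^2$, iff $\mathrm{N}_{K/\bbq}(P(x))\in(\bbq^\times)^2$ (the last equivalence by Hilbert 90: if $\mathrm{N}_{K/\bbq}(P(x))=c^2$ then $P(x)/c$ is a norm-one element, hence of the form $s/\bar s$, so $P(x)=(c/\mathrm{N}_{K/\bbq}(s))\,s^2$). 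I would recognize this condition as cutting out the $\bbq$-points of the Kummer surface $\Res_{K/\bbq}E/(\pm1)$, so the problem becomes one of finding many rational points on that surface and tracking the quadratic field over which each lifts to $E$.

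Next I would make the surface explicit, following the Mestre and Kuwata--Wang construction referenced in the introduction. Writing $x=u+v\sqrt m$ and $P(x)=A(u,v)+B(u,v)\sqrt m$ with $A,B\in\bbq[u,v]$, the Kummer surface is the affine double cover $S:\ w^2=A(u,v)^2-mB(u,v)^2$ of the $(u,v)$-plane. The engine of the construction is the conic-bundle factorization $(A-w)(A+w)=mB^2$: setting $A+w=Bs$ gives the fibering relation $Bs^2-2As+mB=0$, and wherever it holds one computes
\[
P(x)=A+B\sqrt m=\frac{B}{2s}\,(s+\sqrt m)^2 .
\]
Hence every rational point of $S$ produces a point $(x,y)\in E$ with $y=(s+\sqrt m)\sqrt{B/(2s)}$ defined over $K\bigl(\sqrt{B/(2s)}\bigr)\subseteq\bbq(2)$, whose class in the quadratic twist of $E_K$ is governed by $d:=B(u,v)/(2s)\in\bbq^\times$. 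Following Kuwata--Wang I would then exhibit an explicit curve of genus $\le1$ on $S$ defined over $\bbq$ carrying infinitely many rational points --- the concrete construction promised for this section --- giving a one-parameter family of points $P_t\in E(\bbq(2))$ with twist parameter $d(t)$ a rational function of $t$.

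The rank statement rests on two points. First, the $P_t$ must populate infinitely many distinct quadratic twists $E^{(d)}$ of $E_K$, which amounts to showing $d(t)$ is non-constant modulo squares, i.e.\ that the homogeneous numerator of $d(t)$ is not a perfect square. Here the Galois symmetry $\sqrt m\mapsto-\sqrt m$ of the construction, which interchanges $E$ with its conjugate, is exactly what produces the functional equation of Lemma~\ref{zero} for this numerator, while the hypothesis $j(E)\neq0,1728$ --- equivalently $\alpha\beta\neq0$ --- forces its degree to be $2(2k+1)$ as the lemma requires; Lemma~\ref{zero} then rules out its being a square. Second, since $E(\bbq^{\ab})_{\tor}$ is finite, all but finitely many $P_t$ are non-torsion, and non-torsion points in distinct twists are independent in $E(\bbq(2))$ (the ranks add up over a multiquadratic field), so the rank is infinite.

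The hard part will be the rank step rather than the production of points: one must verify that the explicit twisting form has exactly the degree and functional equation needed to invoke Lemma~\ref{zero}, and it is precisely the degenerate cases $j=0$ and $j=1728$ --- where $P$ acquires extra symmetry and the degree drops --- that the lemma cannot handle. A secondary difficulty is the explicit Mestre--Kuwata--Wang construction itself: the surface $S$ is a (singular) K3 surface, so its mere existence does not furnish rational points, and one must produce by hand a genus-$\le1$ curve on it with infinitely many $\bbq$-points and compute the associated $d(t)$.
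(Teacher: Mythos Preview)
Your architecture is the paper's: find a one-parameter family of $\bbq$-points on the Kummer surface of $\Res_{K/\bbq}E$, read off a quadratic twist parameter $d(t)$, use Lemma~\ref{zero} to show $d$ is not a square, and finish with torsion finiteness. The genuine gap is the step you defer to ``following Kuwata--Wang'': your conic-bundle identity $(A-w)(A+w)=mB^2$ correctly explains how a rational point of $S$ lifts to $E(\bbq(2))$, but produces no points of $S$. The paper's device for this is not the diagonal pullback of Lemma~\ref{curve} (which wants Legendre form) and does not emerge from your conic-bundle picture. Instead of moving $x\in K$, it moves the \emph{model}: for $\gamma=u+v\sqrt m\in K^\times$, on the isomorphic curve $E_\gamma:y^2=x^3+\gamma^4\alpha x+\gamma^6\beta$ the imaginary part of $P_\gamma(x)$ is \emph{linear} in $x\in\bbq$, so there is a unique $x_\gamma\in\bbq$ with $P_\gamma(x_\gamma)\in\bbq$. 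Pulling back by $x\mapsto\gamma^{-2}x$ gives a point of $E$ over $K(\sqrt{Q(u,v)})$ with $Q$ an explicit homogeneous form of degree $22$; modulo $\pm1$ this is the promised rational curve $\bbp^1_{[u:v]}\to S$.

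Two attributions in your sketch are off. The functional equation $Q(mu,v)=m^{11}Q(v,u)$ feeding Lemma~\ref{zero} comes not from Galois conjugation $\sqrt m\mapsto-\sqrt m$ but from the substitution $\gamma\mapsto\sqrt m\,\gamma$ (observe $mu+v\sqrt m=\sqrt m\,(v+u\sqrt m)$); and the hypothesis $j\neq0,1728$ is used not to force the degree of $Q$ to be $2(2k+1)$ --- that degree is $22$ regardless --- but first to twist so that $\alpha,\beta\notin\bbq$, and then in a bare-hands coefficient computation showing $Q\not\equiv0$. Finally, $E(\bbq^{\ab})_{\tor}$ need not be finite (CM curves); the paper instead uses Silverman's lemma that $\bigcup_{[L:K]\le d}E(L)_{\tor}$ is finite, which suffices since every constructed point lies in an extension of degree at most $4$ over $\bbq$, and it invokes Hilbert irreducibility explicitly to pass from ``$Q$ not a square'' to ``infinitely many distinct quadratic fields''.
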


\begin{proof}
Let $K=\bbq(\sqrt{m})$, where $m\in\bbz$ is a square-free integer, and $E\colon y^2=P(x):=x^3+\alpha x+\beta$ an elliptic curve defined over $K$.  By the hypothesis on the $j$-invariant, $\alpha\neq 0$ and $\beta\neq 0$.
Replacing $\alpha$ and $\beta$ by $\lambda^4\alpha$ and $\lambda^6\beta$ for
suitable $\lambda\in K$, we may assume without loss of generality that
$\alpha,\beta \notin \bbq$.

Let $\alpha=a+c\sqrt{m}$ and $\beta=b+d\sqrt{m}$ for $a, b, c, d \in\bbq, c, d\neq
0$. Then for $x_1 := -\frac{d}{c}\in\bbq$, we have $P(x_1)\in\bbq$, and
$$\left(x_1, \sqrt{P(x_1)}\right)\in E\left(K(\sqrt{P(x_1)}\right)\subseteq E\left(\bbq^{ab}\right).$$
 Now by substituting $\alpha$ by $\gamma^4\alpha$ and $\beta$ by $\gamma^6\beta$ for $\gamma\in K$ such that
 $\gamma^4\alpha, \gamma^6\beta\notin \bbq$,
 we get an isomorphism over $K$ between $E$ and the elliptic curve
 $$E_\gamma :y^2=P_\gamma(x):=x^3+\gamma^4\alpha x+\gamma^6\beta.$$

 For each such $\gamma=u+v\sqrt{m}$ for $u,v\in \bbq$, we get a point
 \begin{equation}\label{point}
 \left(\gamma^{-2}x_\gamma, \gamma^{-3}\sqrt{P_\gamma(x_\gamma)}\right)\in
E\left(K\Bigl(\sqrt{P(x_\gamma)}\Bigr)\right)\subseteq E\left(\bbq^{ab}\right),
 \end{equation}
 where $x_\gamma\in \bbq$ and $P_\gamma(x_\gamma)\in\bbq$.

\

\

Now we show that there are infinitely many quadratic fields $L$
such that $\bbq\Bigl(\sqrt{P_\gamma(x_\gamma)}\Bigr) = L$ for some $\gamma\in K$.

For $x\in\bbq$,
we expand $P_\gamma(x)$ as $R+I\sqrt m$ where $R, I\in \bbq[u,v,x]$ and we get
$$I=xT_1(u,v)+S_1(u,v) \text{ and } R=x^3+xT_2(u,v)+S_2(u,v),$$ where
 $T_i$ and $S_i$ are homogeneous polynomials in $u$ and $v$ over $\bbq$ of degree $4$ and $6$ respectively satisfying relations:
\begin{equation}\label{rel}
T_i(mu,v)=m^2T_i(v,u), ~~~~~S_i(mu,v)=m^3S_i(v,u).
\end{equation}
We solve the equation
$I=xT_1(u,v)+S_1(u,v)=0$ for $x$ and get
$$ x_\gamma= -\dfrac{S_1(u,v)}{T_1(u,v)}.$$

We then substitute this value of $x$ into the rational part $R$ of $P_\gamma(x)$, and after clearing the
denominator by multiplying by the square $(T_1(u,v))^4$, we obtain the polynomial
$$-T_1(u,v)(S_1(u,v)^3+S_1(u,v)~T_1(u,v)^2~T_2(u,v)-S_2(u,v)~T_1(u,v)^3),$$
which we denote $Q$.  Thus,
$Q$ is homogeneous of degree $22$ over $\bbq$ and from the relation (\ref{rel}), it
satisfies \begin{equation}\label{rel2}Q(mu,v)=m^{11}Q(v,u).\end{equation}

Note that by direct computation, the coefficients of the $u^{22}$-term and $u^{21}v$-term in $Q(u,v)$ are respectively,
$$A_0=c(-d^3-adc^2+bc^3), ~~~A_1=2(-6a^2dc^2-2ad^3+5abc^3+mc^4d-9cd^2b).$$
If $Q(u,v)=0$, then $A_0=A_1=0$. Since $c\neq 0$ and $d\neq 0$, we solve $A_0=0$ for $a$ and substitute
$$a=\dfrac{bc^3-d^3}{c^2d}$$ into $A_1=0$. Then we get
$$-b^2c^6-4c^3d^3b-4d^6+mc^6d^2=0,$$
whose discriminant in $b$ is $mc^{12}d^2$ which is not a square in $\bbq$. Hence $A_1\neq 0$.
This shows that $Q(u,v)$ cannot be identically zero.
By Lemma~\ref{zero}, $Q(u,v)$ cannot be a perfect square in $\bbc[u,v]$.

 Hence $y^2-Q(u,v)$ is irreducible over $\bbc$.

  Let $f(t)\in\bbq[t]$ be the polynomial of degree $22$ in the variable $t=u/v$ obtained by replacing  $Q(u,v)$ by $Q(u,v)v^{-22}$. For a finite extension $L$ of $K$, we let
  $$H(f, L):=\{t'\in \bbq: f(t')-y^2 \text{ is
irreducible over }L\}$$
the intersection of $\bbq$ with the Hilbert set of $f$ over $L$.
 By the Hilbert irreducibility theorem (\cite[Chapter~12]{JF}), such an intersection  is non-empty.

 Hence there exists $\gamma_0=u_0+v_0\sqrt{m}\in K$ such that
 $$L_0:=\bbq\Bigl(\sqrt{P_{\gamma_0}(x_{\gamma_0})}\Bigr)=\bbq\Bigl(\sqrt{Q(u_{\gamma_0},v_{\gamma_0})}\Bigr)$$
is a quadratic  field not contained in $L$.
Inductively, we get an infinite sequence of $\gamma_k=u_k+v_k\sqrt{m}$  such that  the
fields
$$L_k=\bbq\Bigl(\sqrt{P_{\gamma_k}(x_{\gamma_k})}\Bigr) = \bbq\Bigl(\sqrt{Q(u_{\gamma_k},v_{\gamma_k})}\Bigr)$$
are all linearly disjoint.

Let $V$ be the set  $$V:=\left\{\left(\gamma_k^{-2}x_{\gamma_k},\gamma_k^{-3}\sqrt{P_{\gamma_k}(x_{\gamma_k})}\right)\in E\Bigl(K\Bigl(\sqrt{P(x_{\gamma_k})}\Bigr)\Bigr)\right\}_{k=0}^\infty.$$

By \cite[Lemma]{S}, the set $\bigcup\limits_{[L:K]\leq d}
E(L)_{tor}$ is a finite set, where the union runs all over finite
extensions $L$ of $K$ whose degree over $K$ is less than or equal to $d$. Therefore, $V$ contains only finitely many torsion points.
Then   by linear disjointness of $K L_i$ over $K$, non-torsion points
$(\gamma_k^{-2}x_{\gamma_k},\gamma_k^{-3}\sqrt{P_{\gamma_k}(x_{\gamma_k})})\in V$ are linearly independent in $E(K\bbq^{ab})$. Therefore the
rank of $E(K\bbq(2))$ is infinite, therefore, the rank of $E(K\bbq^{ab})\subseteq E(\bbq^{ab})$ is infinite.
\end{proof}

\section{The Cubic Case}

\begin{thm}\label{cubic}
Let $\lambda$ denote an element of a cubic extension $K$ of $\bbq$.  Then
$E\colon y^2 = x(x-1)(x-\lambda)$ has infinite rank over $K \bbq^{\ab}$.
\end{thm}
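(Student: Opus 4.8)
The plan is to imitate the proof of Theorem~\ref{main}, replacing the single relation coming from $\bbq(\sqrt m)$ by the power basis of the cubic field $K=\bbq(\lambda)$. Since a cubic field has no proper intermediate field, we may assume $\bbq(\lambda)=K$ (otherwise $\lambda\in\bbq$ and \cite{FJ} applies). The key reduction is the following: if we can find $x\in\bbq$, $d\in\bbq^\times$ and $\nu\in K^\times$ with $x-\lambda=d\,\nu^2$, then $x(x-1)(x-\lambda)=d\,x(x-1)\,\nu^2$, so that
$$R=\Bigl(x,\ \nu\sqrt{d\,x(x-1)}\Bigr)$$
is a point of $E$ defined over $K\cdot\bbq\bigl(\sqrt{d\,x(x-1)}\bigr)\subseteq K\bbq(2)\subseteq K\bbq^{\ab}$. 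Because $[K:\bbq]=3$ is prime to $2$ we have $K\cap\bbq(2)=\bbq$, so $K\bbq(2)$ is a field and these really are points of $E(K\bbq^{\ab})$. Producing infinitely many such $R$ over a strictly growing tower of quadratic fields will give infinite rank, exactly as in Section~3. Geometrically these points sweep out the genus-$0$ curve furnished by Lemma~\ref{curve} for $n=3$, constructed over $\bbq$ in \cite{Im}.

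First I would parametrize the solutions of $x-\lambda=d\nu^2$. Writing $\nu=\nu_0+\nu_1\lambda+\nu_2\lambda^2$ and reducing modulo the relation $\lambda^3=a_1\lambda^2-a_2\lambda+a_3$ satisfied by $\lambda$, the coefficient of $\lambda^2$ in $\nu^2$ is
$$F(\nu_0,\nu_1,\nu_2)=\nu_1^2+2\nu_0\nu_2+(a_1^2-a_2)\nu_2^2+2a_1\nu_1\nu_2 .$$
The requirement that $x-\lambda=d\nu^2$ lie in $\bbq+\bbq\lambda$ with $\lambda$-coefficient $-d$ forces exactly $F(\nu)=0$, and then the coefficients of $1$ and $\lambda$ in $\nu^2$, call them $c_0$ and $c_1$, determine $d=-1/c_1$ and $x=-c_0/c_1$. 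Now $F=0$ is a conic in $\mathbf{P}^2$ which is nonsingular (its Gram matrix has determinant $-1$) and carries the rational point $(\nu_0:\nu_1:\nu_2)=(1:0:0)$, so it is isomorphic to $\mathbf{P}^1_\bbq$ and has infinitely many rational points. Parametrizing them by $s$, I obtain $x(s),d(s)\in\bbq(s)$ and, for all but finitely many $s$, a point $R_s\in E(K\bbq(2))$ defined over the quadratic field $L_s=\bbq\bigl(\sqrt{g(s)}\bigr)$, where $g=d\,x(x-1)$.

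It remains to extract infinitely many independent points, and here I would follow Section~3 closely. By \cite[Lemma]{S} the curve $E$ has only finitely many torsion points of bounded degree over $K$, so only finitely many $R_s$ are torsion. Provided $g$ is not a perfect square in $\bbc(s)$, the curve $y^2=g(s)$ is irreducible and the Hilbert irreducibility theorem (\cite{JF}) lets me choose $s_0,s_1,\dots$ for which the fields $L_{s_k}$ are pairwise linearly disjoint over $\bbq$; the non-torsion points $R_{s_k}$, living over $KL_{s_k}$ with the $L_{s_k}$ linearly disjoint and disjoint from $K$, are then linearly independent in $E(K\bbq^{\ab})$, which proves infinite rank.

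The crux, and the step I expect to require genuine work, is verifying that $g=d\,x(x-1)$ is \emph{not} a perfect square in $\bbc(s)$, i.e.\ that the $L_s$ genuinely vary. Substituting $x=-c_0/c_1$ and $d=-1/c_1$ gives $g=-c_0(c_0+c_1)/c_1^3$, so modulo squares $g\equiv -c_0(c_0+c_1)\,c_1$, a product of three quadratic forms in $(\nu_0,\nu_1,\nu_2)$ restricted to the conic $F=0\cong\mathbf{P}^1$; the task is to check that this product has a zero of odd order, equivalently that the three forms do not conspire to make every zero of even multiplicity. This is the cubic analogue of the computation of $A_0,A_1$ together with Lemma~\ref{zero} in the proof of Theorem~\ref{main}, but the $u\leftrightarrow v$ symmetry exploited there is absent in the cubic setting, so a direct argument — a discriminant computation showing the relevant forms are not proportional to squares for a cubic $\lambda$ — will be needed, with attention to whether special values of $\lambda$ (such as those with $j\in\{0,1728\}$) force a degeneration.
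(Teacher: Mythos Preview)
Your approach is essentially the same as the paper's: the paper's explicit choice
$\nu=\frac{b-t^2}{2}+(t-a)\lambda+\lambda^2$ (with $L(t)=t^3-at^2+bt-c$ the minimal polynomial of $\lambda$) is precisely the parametrization of your conic $F=0$ obtained by projecting from your basepoint $(1{:}0{:}0)$, after the affine change $s=t-a$; one then has $c_0=M(t)$, $c_1=-L(t)$ in your notation, and the paper's $N(t)=L(t)M(t)(M(t)-L(t))$ is your $-c_1c_0(c_0+c_1)$.

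Where your write-up stops short is the step you flag as the crux, and here the paper's resolution is much cheaper than the discriminant analysis you anticipate: since $\deg L=3$, $\deg M=4$, and $\deg(M-L)=4$, the product $N(t)$ has degree $11$, which is odd, so $w^2-N(t)$ is automatically irreducible over $\bbc[w,t]$. No case analysis on $\lambda$, no exclusion of $j\in\{0,1728\}$, and no analogue of Lemma~\ref{zero} is required. The moral is that carrying out the explicit parametrization first, rather than reasoning abstractly about ``three quadratic forms restricted to a conic'', makes the non-square check immediate via a parity observation.
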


\begin{proof}
If $\lambda\in\bbq$, then we are done, so we assume that $\bbq(\lambda)=K$.

Let
$$L(t) := t^3 - at^2 + bt - c$$
denote the minimal polynomial of $\lambda$.  Expanding, we have
$$\Bigl(\frac{b-t^2}2+ (t-a)\lambda + \lambda^2\Bigr)^2
= M(t) - L(t)\lambda,$$
where
$$M(t) := \frac{t^4-2bt^2+8ct+b^2-4ac}4.$$
Let
$$N(t) := L(t)M(t)(M(t)-L(t)).$$
Defining
\begin{align*}
x &:= \frac{M(t)}{L(t)},\\
y &:=\frac{\Bigl(\frac{b-t^2}2+ (t-a)\lambda + \lambda^2\Bigr)}{L(t)^2}\sqrt{N(t)},\\
\end{align*}
we verify by computation that $(x,y)\in K(t,\sqrt{N(t)})^2$ lies on $E$, i.e., belongs to $E(K(t,\sqrt{N(t)}))$.
Note that $\deg N = 11$, so $w^2 - N(t)$ is irreducible in $\bbc[w,t]$.
Specializing $t$ in $\bbq$, and applying Hilbert irreducibility, as before, we obtain points of $E(K L_i)$ for an infinite sequence
of quadratic fields $L_i/\bbq$.  It follows that $E$ has infinite rank over $K\bbq(2)$ and therefore over $K \bbq^{\ab}$.

\end{proof}

\section{The Quartic Case}

\begin{thm}\label{quartic}

Let $\lambda$ denote an element generating a quartic extension $K$ of $\bbq$.  Let $P(x)$ be the (monic)
minimal polynomial of $\lambda$ over $\bbq$.  If the genus $1$ curve
\begin{equation}
\label{ec}
v^2 = P(u) :=  u^4+pu^3+qu^2+ru+s
\end{equation}
is an elliptic curve of positive rank over $\bbq$, then
$E\colon y^2 = x(x-1)(x-\lambda)$ has infinite rank over $K \bbq^{\ab}$.
\end{thm}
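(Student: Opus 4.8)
The plan is to produce, for each of infinitely many rational numbers $x$, a point of $E$ with $x$-coordinate $x$ that is defined over a quadratic extension of $\bbq$, and to arrange that these quadratic extensions are infinitely often distinct. Since $x$ and $x-1$ are rational, a point $(x,y)$ on $E$ has $y\in K\bbq(2)$ exactly when $x(x-1)(x-\lambda)\in K^{*2}\bbq^{*}$, which (for $x\neq 0,1$) amounts to $x-\lambda\in K^{*2}\bbq^{*}$. Writing $x-\lambda=-\theta^{2}/\beta$ with $\theta\in K$ and $\beta\in\bbq^{*}$, and accordingly $\theta^{2}=\alpha+\beta\lambda$ with $x=-\alpha/\beta$, this reduces the whole problem to finding elements $\theta\in K$ whose square lies in the two-dimensional subspace $\bbq\oplus\bbq\lambda\subset K$. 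This is where the hypothesis on $v^{2}=P(u)$ must enter, and it explains why the quartic case cannot be handled by the genus-$0$ trick of Theorem~\ref{cubic}: the naive analogue of the identity there fails because $\deg(\lambda P)=\deg(\theta^{2})$ forces a non-rational leading coefficient.

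The condition $\theta^{2}\in\bbq\oplus\bbq\lambda$ is the simultaneous vanishing of the $\lambda^{2}$- and $\lambda^{3}$-components of $\theta^{2}$ (reduced modulo $P$), so it cuts out a curve $C$ in $\bbp^{3}=\bbp(K)$ as an intersection of two quadrics; I expect this to be precisely the genus-$1$ curve of \cite{Im}. Writing $\theta^{2}=\alpha+\beta\lambda$, the assignment $\theta\mapsto(u,v):=(-\alpha/\beta,\ N_{K/\bbq}(\theta)/\beta^{2})$ defines a morphism $C\to\mathcal{E}$, where $\mathcal{E}\colon v^{2}=P(u)$, since $N_{K/\bbq}(\alpha+\beta\lambda)=\beta^{4}P(-\alpha/\beta)$ yields $v^{2}=P(u)$. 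I would verify that this morphism is non-constant (a sign count over $\overline{\bbq}$ suggests degree $4$); as any non-constant map of genus-$1$ curves is unramified by Riemann--Hurwitz, it is, after translation, an isogeny. Because $\theta=1$ is a rational point of $C$, the curve $C$ is then an elliptic curve over $\bbq$ isogenous to $\mathcal{E}$, so $\mathrm{rank}\,C(\bbq)=\mathrm{rank}\,\mathcal{E}(\bbq)>0$ and $C(\bbq)$ is infinite.

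Each rational point $\theta_{n}\in C(\bbq)$ with $\beta_{n}\neq 0$ then gives $x_{n}=-\alpha_{n}/\beta_{n}\in\bbq$ and, from $y^{2}=x_{n}(x_{n}-1)(x_{n}-\lambda)=\theta_{n}^{2}\cdot\bigl(-x_{n}(x_{n}-1)/\beta_{n}\bigr)$, a point $(x_{n},y_{n})\in E\bigl(K(\sqrt{D_{n}})\bigr)$ with $D_{n}=-x_{n}(x_{n}-1)/\beta_{n}\in\bbq^{*}$. To conclude exactly as in the proof of Theorem~\ref{main}, I would show that infinitely many of the quadratic fields $\bbq(\sqrt{D_{n}})$ are distinct; granting this, the lemma of \cite{S} bounds the torsion in bounded degree, and linear disjointness of the $\bbq(\sqrt{D_{n}})$ forces the non-torsion points among the $(x_{n},y_{n})$ to be independent in $E(K\bbq(2))\subseteq E(\bbq^{\ab})$, giving infinite rank.

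The main obstacle is this last step, because one no longer has Hilbert irreducibility along a line: the $\theta_{n}$ form only a finitely generated group of rational points on $C$, and I must control the square classes of $D_{n}$ along them. I would treat $D=-x(x-1)/\beta$ as a non-constant function on $C$ and argue that its divisor is not a square times a constant, so that for each fixed square class $d$ the double cover $w^{2}=d\,D$ of $C$ has genus at least $2$; by Faltings it has only finitely many rational points, so no single square class can absorb infinitely many $\theta_{n}$, whence infinitely many distinct fields $\bbq(\sqrt{D_{n}})$ occur. Confirming that $C$ is geometrically integral and smooth, that $C\to\mathcal{E}$ is genuinely an isogeny, and that $D$ carries at least two branch points of odd order are the verifications I expect to demand the most care.
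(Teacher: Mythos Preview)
Your proposal is correct and is the paper's argument in geometric dress: both proofs seek $\theta\in K$ with $\theta^{2}=\alpha+\beta\lambda$, set $x=-\alpha/\beta$, and then invoke Faltings' theorem to show that the resulting rational square classes $D$ assume infinitely many values. The paper, rather than introducing the quadric-intersection curve $C\subset\bbp^{3}$ and the norm map $C\to\mathcal E$, simply writes down explicit polynomials $A(u,v)$, $B(u,v)$, $C(u,v)$ on $\mathcal E$ satisfying $C(u,v)^{2}=A(u,v)-B(u,v)\lambda$; this is in effect an explicit isomorphism $\mathcal E\xrightarrow{\sim} C$, since $u$ and $v$ can be read off from the $\lambda^{2}$- and $\lambda$-coefficients of $C(u,v)$, so your isogeny step and the smoothness/irreducibility checks are bypassed. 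The paper then handles the non-square-divisor verification you flag at the end by a single Laurent-series computation at a point at infinity of $\mathcal E$, finding valuations $6$, $5$, $6$ for $A$, $B$, $A-B$ and hence odd order for $D=AB(A-B)$; your function $-x(x-1)/\beta$ pulls back to $A(A-B)/B^{3}$, which differs from the paper's $D$ by the square $B^{4}$, so the two verifications are literally equivalent. Your route has the merit of explaining where the paper's formulas come from; the paper's route has the merit of reducing every verification to explicit computation.
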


\begin{proof}
If $(u,v)$ satisfies (\ref{ec}), then setting
\begin{multline*}
A(u,v) = (2 u^4+ p u^3 - ru- 2 s)v \\
+ \frac{8  u^6  + 8 p u^5 +  (p^2  + 4 q) u^4 - (8 s  + 2 pr) u^2 - 8 psu+  r^2 - 4 qs}4,
\end{multline*}
\begin{multline*}
B(u,v) = (4 u^3  +  3 p u^2   +  2 qu +  r)v \\
+  4  u^5 +  5 p u^4 +   (p^2   +  4 q) u^3 +  (4 r +  pq) u^2 +  (4 s+  rp)u+  ps,
\end{multline*}
and
$$C(u,v) := \frac{-2uv-2u^3-pu^2+r}2 +(v+u^2+pu+q)\lambda + (u+p)\lambda^2 + \lambda^3,$$
we have
$$C(u,v)^2 = A(u,v) - B(u,v)\lambda$$
by explicit computation.
Thus, if $(u,v)\in\bbq^2$, we have
\begin{equation}\label{pt}
\begin{split}
 P_{(u,v)}:&=\biggl(\frac{A(u,v)}{B(u,v)},C(u,v)\sqrt{\frac{A(u,v)(A(u,v)-B(u,v))}{B(u,v)^3}}\biggr) \\
&\in E\left(K \bbq\left(\sqrt{D(u,v)}\right)\right),
\end{split}
\end{equation}
where
$$D(u,v) := A(u,v)B(u,v)(A(u,v)-B(u,v))\in\bbq[u,v].$$

We embed the function field $F$ of (\ref{ec}) in the field of Laurent series $F_\infty := \bbc((t))$ by mapping $u$ to $1/t$ and $v$ to the square root
of $P(u)$ in $\bbc((t))$ with principal term $1/t^2$.  We choose the correct square root of $P(u)$ so that this defines a discrete valuation on $F$ with respect to which $A(u,v)$, $B(u,v)$ and $A(u,v)-B(u,v)$ have value
$6$, $5$, and $6$ respectively.  It follows that $F_\infty(\sqrt{D(u,v)}) = \bbc((t^{1/2}))$.  This implies that $\sqrt{D(u,v)}$ does not lie in $F$.
Therefore, $\sqrt{D(u,v)}\not\in F$.   Let $X$ denote the projective non-singular curve over $\bbc$ with function field $F[z]/(z^2-D(u,v))$.  Then there exists a
morphism from $X$ to the projective non-singular curve with function field $F$, which is ramified at $F_\infty$.  It follows that the genus of $X$ is at least $2$.
By Faltings' theorem \cite{F}, $X(\bbq(\sqrt D))$ is finite for all $D\in\bbq$.  If there are infinitely many $\bbq$-points $\{Q_k:=(u_k, v_k)\}_{k=1}^\infty$ on (\ref{ec}), their inverse images generate
infinitely many different quadratic extensions  of $\bbq$, and so the points
 $\{P_{(u_k,v_k)}\}_{k=1}^\infty$ of $E$ in (\ref{pt}) are defined over different quadratic extensions
 $K \bbq(\sqrt{D(u_k,v_k)})$ of $\bbq$. By \cite[Lemma]{S} again, it follows that $E(K \bbq(2))$ has infinite rank.
\end{proof}

\end{document}